\newcommand{\abs}[1]{{\left|#1\right|}}
\newcommand{\norma}[1]{{\left\Vert#1\right\Vert}}
\def\XXint#1#2#3{{\setbox0=\hbox{$#1{#2#3}{\int}$}
    \vcenter{\hbox{$#2#3$}}\kern-.5\wd0}}
\theoremstyle{definition}
\newtheorem{definizione}{Definition}[section]
\theoremstyle{plain}
\newtheorem{teorema}{Theorem}[section]
\newtheorem{lemma}[teorema]{Lemma}
\newtheorem{prop}[teorema]{Proposition}
\theoremstyle{definition}
\newtheorem{esempio}{Example}[section]
\newtheorem{oss}[esempio]{Remark}
\newtheorem*{open*}{Open problems}
\DeclareMathOperator{\R}{\mathbb{R}}
\newcommand{\myfootnote}[2]{\begingroup
	\def\@makefnmark{}%
	\addtocounter{footnote}{-1}%
	\footnote{\textbf{#1} #2}
	\endgroup}
\title{The asymptotic behavior of the first Robin eigenvalue with negative parameter as $p$ goes to $+\infty$}
\author{Rosa Barbato, Francesca de Giovanni*, Alba Lia Masiello}
\date{}
\newcommand{\Addresses}{{
\bigskip 
  \footnotesize 
 
  \textit{E-mail address}, R.~Barbato: \texttt{rosa.barbato2@unina.it} 
  
   \medskip 

    \textit{E-mail address}, F. ~de Giovanni (corresponding author): \texttt{ francesca.degiovanni@unina.it} 
 
  \medskip
  \textsc{Dipartimento di Matematica e Applicazioni ``R. Caccioppoli'', Universit\`a degli studi di Napoli Federico II, Via Cintia, Complesso Universitario Monte S. Angelo, 80126 Napoli, Italy.}

   \medskip  
   
  \textit{E-mail address}, A.L.~Masiello: \texttt{masiello@altamatematica.it} 
 
   \medskip 
   \noindent \textsc{
 	Holder of a research grant from Istituto Nazionale di Alta Matematica "Francesco Severi" at Dipartimento di Matematica e Applicazioni "R. Caccioppoli", Via Cintia, Complesso Universitario Monte S. Angelo, 80126 Napoli, Italy.}

 \par\nopagebreak 

}} 
\begin{document}

\maketitle
 \begin{abstract}
In this paper, we want to study the asymptotic behavior of the first $p$-Laplacian eigenvalue, with Robin boundary conditions, with negative boundary parameter. In particular, we prove that the limit of the eigenfunctions is a viscosity solution for the infinity Laplacian eigenvalue problem.
\newline
\newline
\textsc{Keywords:}  Laplace operator; Negative parameter; asymptotic behavior\\
\textsc{MSC 2020:}  35J05, 35J25, 35B40
\end{abstract}

\section{Introduction}
Let $\Omega\subset \R^n$ be a bounded open set, with Lipschitz boundary, with $n\geq 2$, and let $\beta>0$.
We aim to study the asymptotic behavior as $p$ goes to $+\infty$ of the first eigenvalue of the Robin $p$-Laplacian on $\Omega$ with a negative boundary parameter
\begin{equation}
    \label{rel}
    \lambda_{p, \beta}(\Omega)=\min_{\substack{w\in W^{1,p}(\Omega) \\ w \neq 0}} \frac{\displaystyle{\int_\Omega \abs{\nabla w}^p\, dx-\beta^p \int_{\partial\Omega} \abs{w}^p\, d\mathcal{H}^{n-1}}}{\displaystyle{\int_\Omega \abs{w}^p\, dx}}.
\end{equation}
A minimizer $u$ in \eqref{rel} is a solution to the $p$-Laplace eigenvalue equation with Robin boundary conditions

\begin{equation}
\label{autoval_problem}
\begin{cases}
-\Delta_p u=\lambda_{p,\beta}(\Omega) \abs{u}^{p-2} u & \text{ in } \Omega \\
\abs{\nabla u}^{p-2} \dfrac{\partial u}{\partial \nu} -\beta^p \abs{u}^{p-2}u=0 & \text{ on } \partial \Omega.
\end{cases}
\end{equation}
The eigenvalue $\lambda_{p,\beta}(\Omega)$ is negative, as it can be seen by choosing $w=1$ in the variational formulation \eqref{rel},
$$\lambda_{p,\beta}(\Omega)\leq -\beta^p\dfrac{P(\Omega)}{\abs{\Omega}}.$$
Moreover, as the value of the functional in \eqref{rel} is the same in $w$ and $-w$, we can choose the eigenfunction $u$ to be positive.
 For a negative boundary parameter, some problems are not completely solved, such as the optimization of $\lambda_{p,\beta}(\Omega)$ with a volume constraint. Indeed, in 1977 Bareket \cite{Bareket} conjectured that the first eigenvalue is maximized by a ball in the class of smooth bounded domains of given volume.  The authors in \cite{Freitas_Krejcirik} (see \cite{kovavrik2017p} for the $p$-Laplacian case) have disproved the Bereket conjecture for $\left|\beta\right|^p$ large enough, studying the asymptotic behavior of the eigenvalue in \eqref{rel}, showing that

 $$ \lambda_{p,\beta}(\Omega) = -(p-1)\beta^{\frac{p^2}{p-1}}- (n-1)H_{max}(\Omega)\beta^p+ o(\beta^p), \quad \beta^p \rightarrow +\infty,$$
 
 where $H_{max}(\Omega)$ is the maximum mean curvature of $\partial \Omega$. This asymptotic allows the authors to conclude that the eigenvalue of the spherical shell is larger than the one of the ball with the same measure.
 
 In the same paper, they also showed that the conjecture is indeed true for small values of $\left|\beta\right|^p$ in a suitable class of domains. We also recall that, in the Finsler setting, this problem has been addressed in \cite{paoli2019two}.

In the present paper, we aim to study the limit as $p$ goes to $+\infty$ of problem \eqref{rel}, following the footsteps of \cite{AMNT, AMNT2, EKNT,Lin_ju_lambda2, JLM}. 
It is worth mentioning that the limit case, as $p$ goes to $1$, has been studied in \cite{della2022behavior} for the Euclidean setting, and in  \cite{frangianross} for the anisotropic case. Moreover, the case of higher Robin eigenvalues of the $p$-Laplace operator as $p$ goes to $1$ has been studied in \cite{segura}, also in the case when $\beta$ is a non-negative function in $L^{\infty}(\partial\Omega)$.

In \cite{Lin_ju_lambda2,JLM} the authors study the limit as $p$ goes to $+\infty$ of the eigevalue of the $p$-Laplace operator with Dirichlet boundary conditions, obtaining a complete characterization of the limiting problems in terms of geometric quantities. 
Indeed, the first two eigenvalues of the $p$-Laplace operator, $\lbrace\lambda_{1,p}^D\rbrace$ and $\lbrace\lambda_{2,p}^D\rbrace$, satisfy
$$
\lim_{p\to \infty} \left(\lambda_{1,p}^D\right)^{{1}/{p}} = \lambda_{1, \infty}^D=: \frac{1}{r(\Omega)} \quad\text{ and }\quad \lim_{p\to \infty} \left(\lambda_{2,p}^D\right)^{{1}/{p}} = \lambda_{2,\infty}^D=: \frac{1}{r_2(\Omega)},
$$
where
$$r(\Omega)=\sup\{t: \exists B_t(x)\subseteq\Omega\},$$
\begin{center}
    and
\end{center}
$$r_2(\Omega)=\sup\{t: \exists B_t(x_1), B_t(x_2)\subseteq\Omega \text{ such that } B_t(x_1)\cap B_t(x_2)=\emptyset \}.$$

The Neumann case was investigated in \cite{EKNT, RS2} and, similarly to the Dirichlet case, the authors proved that the first non-trivial eigenvalues of the $p$-Laplacian with Neumann boundary conditions $\lbrace\lambda^N_p \rbrace$ satisfy
$$\lim_{p\to \infty}\left(\lambda^N_p\right)^{1/p} = \lambda_\infty^N := \frac{2}{D(\Omega)},$$
where $D(\Omega)$ is the intrinsic diameter of $\Omega$, i.e. the supremum of the geodetic distance between two points of $\Omega$.

The Robin boundary conditions with \emph{positive} boundary parameter was addressed in \cite{AMNT, AMNT2}, in which the authors focus on the first and the second eigenvalue respectively. As in the previous cases, the geometry of the set plays a key role in determining the limit of the sequence of eigenvalues, as

$$
\lim_{p\to \infty} \left(\lambda_{1,p}^R\right)^{{1}/{p}} = \lambda_{1, \infty}^R=: \frac{1}{\frac{1}{\beta}+r(\Omega)} \quad\text{ and }\quad \lim_{p\to \infty} \left(\lambda_{2,p}^R\right)^{{1}/{p}} = \lambda_{2,\infty}^R=: \frac{1}{s(\Omega)},
$$
where
\begin{equation*}
   s(\Omega)={\max} \left\{t: \exists x_1, x_2\in \overline{\Omega} \text{ such that }  \abs{x_1-x_2}\ge 2t, \norma{C_{i,t}}_{L^\infty(\partial\Omega)}\le\frac{1}{\beta t} \right\},
\end{equation*} 
and $C_{i,t}$ is the function
$$C_{i,t}(x)=\frac{(t-\abs{x-x_i})_+}{t}.$$

Different cases, as the mixed Robin-Dirichlet boundary conditions or the Steklov boundary conditions, were studied in \cite{gloria, stek, RS}.

In all the aforementioned papers, the geometry of the set $\Omega$ is crucial in studying the limiting problem, so one might expect a similar behavior for problem  \eqref{rel}. This idea is supported also by the fact that, for $p\to 1$, the limit eigenvalue is linked to the Cheeger constant of the set, as showed in \cite{frangianross, della2022behavior}.

\vspace{3mm}
Surprisingly, when dealing with a negative boundary condition, the limiting problem as $p$ goes to infinity appears to be completely independent of the geometry of the set, and this is what is stated in our first main result. 

\begin{teorema}
\label{existence_limit}
Let $\Omega$ be an open, bounded, Lipschitz set and let $\Set{\lambda_{p,\beta}(\Omega)}_{p>1}$ be the sequence of the first eigenvalues of the $p$-Laplace operator with Robin boundary condition defined in \eqref{rel}. Then,
\begin{equation}
\label{limit_autoval}
    \lim_{p\to \infty} \left(-\lambda_{p,\beta}(\Omega)\right)^{\frac{1}{p}}=\beta.
\end{equation}

Moreover, if $\Set{u_p}_{p>1}$ is the sequence of positive eigenfunctions associated to $\{\lambda_{p,\beta}(\Omega)\}_{p>1}$, then there exists a function $u_\infty\in W^{1,\infty}(\Omega)$ such that, up to a subsequence,
\begin{align*}
   & u_p \to u_{\infty} & \text{ uniformly in } \, \Omega, \\
   & \nabla u_p\to \nabla u_\infty & \text{ weakly in } \, L^m(\Omega),\; \forall m.
   \end{align*}
\end{teorema}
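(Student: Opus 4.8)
The statement splits into two largely independent tasks: identifying the exponential rate of the eigenvalue, and extracting a limit eigenfunction. The plan is to prove $\lim_{p\to\infty}(-\lambda_{p,\beta}(\Omega))^{1/p}=\beta$ by matching a lower and an upper bound, and then to use the resulting energy control to run a compactness argument.

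For the \emph{lower} estimate the test function $w\equiv 1$ already recorded in the introduction gives $\lambda_{p,\beta}(\Omega)\le -\beta^p P(\Omega)/\abs{\Omega}$, hence $(-\lambda_{p,\beta}(\Omega))^{1/p}\ge \beta\,(P(\Omega)/\abs{\Omega})^{1/p}\to\beta$, so that $\liminf_{p}(-\lambda_{p,\beta}(\Omega))^{1/p}\ge\beta$. The harder direction is the matching \emph{upper} bound, which I would obtain by controlling the boundary term through a quantitative trace inequality with explicit dependence on $p$. Concretely, I would fix a bounded Lipschitz vector field $V:\overline\Omega\to\R^n$ with $V\cdot\nu\ge1$ on $\partial\Omega$ (such a field exists on Lipschitz domains) and apply the Gauss--Green formula to $\abs{w}^pV$ to get $\int_{\partial\Omega}\abs{w}^p\,d\mathcal H^{n-1}\le\norma{\operatorname{div}V}_\infty\int_\Omega\abs{w}^p+p\,\norma{V}_\infty\int_\Omega\abs{w}^{p-1}\abs{\nabla w}$. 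A weighted Young inequality $p\,\abs{w}^{p-1}\abs{\nabla w}\le\varepsilon\abs{\nabla w}^p+(p-1)\varepsilon^{-1/(p-1)}\abs{w}^p$ then lets me absorb the gradient contribution into $\int_\Omega\abs{\nabla w}^p$ by choosing $\varepsilon=(\beta^p\norma{V}_\infty)^{-1}$, yielding $-\lambda_{p,\beta}(\Omega)\le\beta^p\norma{\operatorname{div}V}_\infty+(p-1)\beta^p\norma{V}_\infty(\beta^p\norma{V}_\infty)^{1/(p-1)}$. Taking $p$-th roots, every factor other than $\beta$ tends to $1$ (using $x^{1/p}\to1$, $(p-1)^{1/p}\to1$ and $\beta^{p/(p-1)}\to\beta$), so $\limsup_{p}(-\lambda_{p,\beta}(\Omega))^{1/p}\le\beta$, which closes \eqref{limit_autoval}.

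For the eigenfunctions I would normalize $\norma{u_p}_{L^\infty(\Omega)}=1$ with $u_p>0$. Testing the equation \eqref{autoval_problem} (equivalently, using that the Rayleigh quotient is negative) gives $\int_\Omega\abs{\nabla u_p}^p=\lambda_{p,\beta}(\Omega)\int_\Omega\abs{u_p}^p+\beta^p\int_{\partial\Omega}\abs{u_p}^p\le\beta^p\int_{\partial\Omega}\abs{u_p}^p\le\beta^pP(\Omega)$, so that $\norma{\nabla u_p}_{L^p(\Omega)}\le\beta\,P(\Omega)^{1/p}$, a bound that stays finite as $p\to\infty$. For each fixed $m$ and every $p\ge m$, Hölder's inequality gives $\norma{\nabla u_p}_{L^m(\Omega)}\le\abs{\Omega}^{1/m-1/p}\norma{\nabla u_p}_{L^p(\Omega)}$, so $\{u_p\}_{p\ge m}$ is bounded in $W^{1,m}(\Omega)$. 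A diagonal argument over $m\in\N$, together with reflexivity, produces a subsequence and a function $u_\infty$ with $\nabla u_p\rightharpoonup\nabla u_\infty$ weakly in $L^m(\Omega)$ for every $m$; weak lower semicontinuity of the norms and letting $m\to\infty$ in $\norma{\nabla u_\infty}_{L^m}\le\abs{\Omega}^{1/m}\beta$ show $\norma{\nabla u_\infty}_{L^\infty}\le\beta$, i.e.\ $u_\infty\in W^{1,\infty}(\Omega)$. Finally, fixing any $m>n$, the compact Morrey embedding $W^{1,m}(\Omega)\hookrightarrow C^{0,1-n/m}(\overline\Omega)$ upgrades the weak convergence to uniform convergence $u_p\to u_\infty$ on $\overline\Omega$, and the $L^\infty$-normalization guarantees $u_\infty\not\equiv0$.

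The main obstacle is the upper bound on $-\lambda_{p,\beta}(\Omega)$: one must produce a trace inequality whose constants are \emph{explicit in $p$} and survive the extraction of the $p$-th root, whereas the qualitatively standard trace theorem carries uncontrolled $p$-dependence. The divergence-field device above is precisely what makes the constants transparent, the only structural requirement being the existence of the field $V$, which is guaranteed by the Lipschitz regularity of $\partial\Omega$. The compactness part is then routine, the only care being the diagonalization that yields a single subsequence valid for all $m$ and the identification of the $L^m$ weak limits as the gradient of one common $u_\infty$.
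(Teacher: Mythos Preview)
Your argument is correct, but it is organized quite differently from the paper's. For the upper bound $\limsup_p(-\lambda_{p,\beta})^{1/p}\le\beta$ you work \emph{a priori}, plugging a quantitative trace inequality (Gauss--Green with a Lipschitz field $V$ satisfying $V\cdot\nu\ge1$, then Young with an explicit $\varepsilon$) directly into the Rayleigh quotient; this yields a bound on $-\lambda_{p,\beta}$ for every admissible $w$ and never touches the eigenfunctions. The paper instead \emph{couples} the two parts of the theorem: it normalizes $\norma{u_p}_{L^p(\partial\Omega)}=1$, reads off from the eigenvalue identity that $\int_\Omega\abs{\nabla u_p}^p\le\beta^p$ and $(-\lambda_{p,\beta})\int_\Omega\abs{u_p}^p\le\beta^p$, extracts the uniform limit $u_\infty$ first, and only then gets the upper bound from $(-\lambda_{p,\beta})^{1/p}\le\beta/\norma{u_p}_{L^p(\Omega)}$ together with $\norma{u_p}_{L^p(\Omega)}\to\norma{u_\infty}_{L^\infty(\Omega)}\ge\norma{u_\infty}_{L^\infty(\partial\Omega)}=1$. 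Your route has the advantage that \eqref{limit_autoval} is proved independently of any compactness, at the price of importing the auxiliary field $V$; the paper's route is more self-contained but the eigenvalue limit is obtained only along the extracted subsequence (and the full limit then follows by a sub-subsequence argument). A minor point: your normalization $\norma{u_p}_{L^\infty(\Omega)}=1$ presupposes $u_p\in L^\infty(\Omega)$, which is true here but requires a word of justification; the paper's $L^p(\partial\Omega)$ normalization avoids this.
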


As a consequence of Theorem \ref{existence_limit}, one can deduce that Bareket's conjecture becomes trivial when $p$ goes to $+\infty$.

\vspace{3mm}
The second result we prove is that the limit function $u_{\infty}$ is a viscosity solution to a PDE problem involving the infinity Laplace operator

$$\Delta_\infty u = \left\langle D^2 u \cdot \nabla u, \nabla u \right\rangle,$$
as often happens in the limiting problems as $p$ goes to $+\infty$. 
The result is stated in the following theorem.
\begin{teorema}\label{teo1.2}
Let $u_\infty \in W^{1,\infty}(\Omega)$ be the function given in Theorem \eqref{existence_limit}. Then $u_\infty$ is a viscosity solution to
\begin{equation}
    \label{prob:lim}
    \begin{cases}
    -\min\Set{\abs{\nabla u} - \beta u,  \Delta_\infty u}=0 &\text{ in } \Omega,\\
    \min\Set{\abs{\nabla u} - \beta u, \displaystyle{\frac{\partial u}{\partial \nu}} }=0 &\text{ on } \partial \Omega.
\end{cases}
\end{equation}
\end{teorema}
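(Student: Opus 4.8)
The plan is to pass to the limit $p\to\infty$ directly in the viscosity formulation of the eigenvalue problem \eqref{autoval_problem} and to recognize \eqref{prob:lim} in the limit. The preliminary observation is that each positive weak solution $u_p$ of \eqref{autoval_problem} is also a viscosity solution, both of the interior equation and of the Robin boundary condition read in the generalized (Barles) sense; this follows from the equivalence between weak and viscosity solutions for the $p$-Laplacian, the zero-order term $\lambda_{p,\beta}u^{p-1}$ being continuous. I would then write the operator in non-divergence form,
\[\Delta_p\phi=|\nabla\phi|^{p-2}\Delta\phi+(p-2)|\nabla\phi|^{p-4}\Delta_\infty\phi,\]
and use $\lambda_{p,\beta}=-|\lambda_{p,\beta}|$ together with $|\lambda_{p,\beta}|^{1/p}\to\beta$ from Theorem \ref{existence_limit}, so that every viscosity inequality can be renormalized and studied through $p$-th roots.

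\emph{Interior.} Let $\phi$ touch $u_\infty$ from above at $x_0\in\Omega$; by the uniform convergence in Theorem \ref{existence_limit} there are points $x_p\to x_0$ at which $\phi$ touches $u_p$ from above, and the subsolution inequality for $u_p$ becomes
\[-|\nabla\phi(x_p)|^{p-2}\Delta\phi(x_p)-(p-2)|\nabla\phi(x_p)|^{p-4}\Delta_\infty\phi(x_p)\le-|\lambda_{p,\beta}|\,u_p(x_p)^{p-1}<0.\]
Since the right-hand side is strictly negative, $\nabla\phi(x_p)\neq0$; estimating the left-hand side from above and taking $p$-th roots yields the gradient bound $|\nabla\phi(x_0)|\ge\beta u_\infty(x_0)$, hence $|\nabla\phi(x_0)|-\beta\phi(x_0)\ge0$ and, in particular, $\nabla\phi(x_0)\neq0$ wherever $u_\infty(x_0)>0$. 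Dividing instead by $(p-2)|\nabla\phi(x_p)|^{p-4}>0$ and letting $p\to\infty$, the strict positivity of the right-hand side forces $\Delta_\infty\phi(x_0)\ge0$; together these give $\min\{|\nabla\phi|-\beta\phi,\Delta_\infty\phi\}(x_0)\ge0$, i.e. the subsolution property. For the supersolution property one touches from below: if $\nabla\phi(x_0)=0$ then $|\nabla\phi(x_0)|-\beta\phi(x_0)=-\beta u_\infty(x_0)\le0$ already makes the minimum nonpositive, while if $\nabla\phi(x_0)\neq0$ and $|\nabla\phi(x_0)|>\beta u_\infty(x_0)$ the renormalized right-hand side tends to $0$ and forces $\Delta_\infty\phi(x_0)\le0$; otherwise $|\nabla\phi(x_0)|-\beta\phi(x_0)\le0$ directly. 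In every case $\min\{|\nabla\phi|-\beta\phi,\Delta_\infty\phi\}(x_0)\le0$.

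\emph{Boundary.} This is where the main difficulty lies. Let $x_0\in\partial\Omega$ and let $\phi$ touch $u_\infty$ from above (the supersolution case being analogous); the approximating contact points $x_p\to x_0$ now lie in $\overline\Omega$. If a subsequence of the $x_p$ stays in $\Omega$, the interior analysis applies verbatim and returns $-\min\{|\nabla\phi|-\beta\phi,\Delta_\infty\phi\}(x_0)\le0$. If instead $x_p\in\partial\Omega$, the generalized boundary condition for $u_p$ gives that, at $x_p$, either the interior inequality holds — handled as above — or the Robin operator satisfies $|\nabla\phi(x_p)|^{p-2}\frac{\partial\phi}{\partial\nu}(x_p)\le\beta^p\phi(x_p)^{p-1}$. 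In the latter case, if $\frac{\partial\phi}{\partial\nu}(x_0)\le0$ the boundary minimum is already nonpositive, whereas if $\frac{\partial\phi}{\partial\nu}(x_0)>0$ then taking $p$-th roots gives $|\nabla\phi(x_0)|\le\beta u_\infty(x_0)$, so that again $\min\{|\nabla\phi|-\beta\phi,\partial\phi/\partial\nu\}(x_0)\le0$. Passing to a subsequence along which one alternative always occurs, and combining the two, yields the required boundary inequality in the viscosity sense.

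The steps that require the most care are: the precise Barles-type formulation of the boundary condition, in particular the sign of the normal-derivative operator that makes the combined problem degenerate elliptic, and the justification that the weak Robin eigenfunctions are viscosity solutions up to $\partial\Omega$; the exclusion of the degenerate case $\nabla\phi(x_0)=0$ in the subsolution argument, which relies on the gradient bound and on the positivity of $u_\infty$; and the repeated $p$-th root asymptotics, where one systematically uses that polynomial-in-$p$ prefactors and bounded geometric quantities disappear in the limit while only $|\lambda_{p,\beta}|^{1/p}\to\beta$ survives.
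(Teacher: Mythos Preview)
Your proposal is correct and follows essentially the same approach as the paper: you use Proposition \ref{teorema1.3} to treat each $u_p$ as a viscosity solution, approximate the contact point $x_0$ by $x_p\to x_0$ via uniform convergence, expand $\Delta_p$ in non-divergence form, divide by $(p-2)\abs{\nabla\phi(x_p)}^{p-4}$, and extract the limiting inequalities through $p$-th roots using $(-\lambda_{p,\beta})^{1/p}\to\beta$; the boundary case is handled by the same interior/boundary dichotomy for the sequence $(x_p)$. The only cosmetic differences are that the paper proves the supersolution property first and organizes the supersolution case-split around the sign of $\Delta_\infty\phi(x_0)$ rather than the size of $\abs{\nabla\phi(x_0)}$ relative to $\beta u_\infty(x_0)$, and it adds the constant $c_p=u_p(x_p)-\phi(x_p)$ explicitly; these are equivalent.
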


Now we want to show that $\beta$ is the first eigenvalue of \eqref{prob:lim}, that is the only $\lambda>0$,  for which problem
\eqref{prob:lim}
admits a non-trivial, positive eigenfunction. The last theorem is the following

\begin{teorema}\label{teo1.3}
    Let $\Omega$ be a $C^2$ bounded domain in $\R^n$. If, for some $\lambda>0$, problem 
    \begin{equation}
    \label{lambda}
    \begin{cases}
    -\min\Set{\abs{\nabla u} - \lambda u,  \Delta_\infty u}=0 &\text{ in } \Omega,\\
    \min\Set{\abs{\nabla u} - \beta u, \displaystyle{\frac{\partial u}{\partial \nu}} }=0 &\text{ on } \partial \Omega.
\end{cases}
\end{equation}

    admits a nontrivial, positive eigenfunction $u$, then $\beta= \lambda$.
\end{teorema}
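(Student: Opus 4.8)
The plan is to locate the maximum of the positive eigenfunction $u$ over the compact set $\overline\Omega$ and to read off the relation $\lambda=\beta$ from the behaviour of $u$ at that point. First I would show that the maximum can only be attained on $\partial\Omega$. Let $M=\max_{\overline\Omega}u>0$, attained at $x_0$, and suppose $x_0\in\Omega$. Testing the interior equation with the constant function $\phi\equiv M$, which touches $u$ from above at $x_0$, the subsolution property forces $-\min\{|\nabla\phi(x_0)|-\lambda\phi(x_0),\Delta_\infty\phi(x_0)\}\le 0$, that is $\min\{-\lambda M,0\}\ge 0$; since $\lambda M>0$ this is impossible. Hence $x_0\in\partial\Omega$.

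Next I would exploit the heuristic that drives the identification. At a boundary maximum the tangential gradient of $u$ vanishes, because $x_0$ maximizes $u$ also along $\partial\Omega$, and the outer normal derivative is nonnegative, since moving inward from $x_0$ cannot increase $u$; thus $\nabla u(x_0)$ is a nonnegative multiple of $\nu(x_0)$. On the interior side the active branch as one approaches the maximum is the eikonal one, $|\nabla u|-\lambda u=0$: the exponential cone $Me^{\lambda d}$, with $d$ the signed distance to $\partial\Omega$, satisfies $\Delta_\infty>0$, so the minimum defining the equation is realized by the first term, forcing $|\nabla u(x_0)|=\lambda u(x_0)>0$. In particular $\partial u/\partial\nu(x_0)>0$, which rules out the degenerate tangential case. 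Inserting $\partial u/\partial\nu(x_0)>0$ into the boundary condition $\min\{|\nabla u|-\beta u,\partial u/\partial\nu\}=0$ forces the other entry to vanish, i.e. $|\nabla u(x_0)|=\beta u(x_0)$. Comparing the two identities at $x_0$ and dividing by $u(x_0)>0$ gives $\lambda=\beta$.

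Finally I would turn this heuristic into a rigorous viscosity argument, and it is here that the $C^2$ hypothesis on $\Omega$ is used: near $x_0$ the signed distance $d$ is $C^2$, the outer normal $\nu$ is smooth, and one can build exponential barriers of the form $M e^{a\,d(x)}$ adapted to the normal direction. The relaxed boundary conditions at $x_0$ would then be exploited in two directions: the supersolution inequality (with test functions touching $u$ from below at $x_0$) to obtain $|\nabla u(x_0)|\le\lambda u(x_0)$ together with $\partial u/\partial\nu(x_0)\ge 0$, and the subsolution inequality (test functions touching from above) to get the reverse bound and activate the eikonal branch, so that $|\nabla u(x_0)|=\lambda u(x_0)$; the boundary condition then closes the argument as above. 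The hard part will be precisely this boundary analysis: checking that the chosen barriers genuinely touch $u$ from the correct side at a point where $u$ need not be differentiable, disentangling the two nested minima in the relaxed sub/supersolution conditions, and excluding the degenerate possibility $\partial u/\partial\nu(x_0)=0$, which would force $\nabla u(x_0)=0$ and contradict $|\nabla u(x_0)|=\lambda u(x_0)>0$. The local Lipschitz regularity coming from the $\infty$-subharmonicity of $u$, itself a consequence of the interior subsolution property, is what should make these constructions feasible.
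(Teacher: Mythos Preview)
Your opening step is correct: testing the interior subsolution condition with the constant $M$ does force the maximum onto $\partial\Omega$, and your heuristic reading of the two eikonal relations $|\nabla u|=\lambda u$ and $|\nabla u|=\beta u$ at that point is the right intuition. The gap is exactly the one you flag at the end, and it is not a technicality: at a boundary maximum you cannot manufacture the barriers you need. Any $C^2$ function $\Phi$ touching $u$ from \emph{above} at $x_0$ has itself a minimum over $\overline\Omega$ at $x_0$, hence $\partial_\nu\Phi(x_0)\le 0$; the relaxed boundary subsolution inequality is then satisfied trivially by the $\partial_\nu$ branch and yields nothing. To touch $u$ from \emph{below} at $x_0$ with an exponential cone $Me^{-ad}$ you would need $u(x)\ge Me^{-ad(x)}$ near $x_0$, which is a quantitative lower bound on $u$ that does not follow from $x_0$ being a maximum. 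So neither viscosity inequality at $x_0$ gives you the identification, and ``excluding the degenerate possibility $\partial_\nu u(x_0)=0$'' is not the obstacle---the obstacle is producing a test function with $\partial_\nu\Phi(x_0)>0$ at all.

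The paper closes this gap by a different mechanism. It passes to $v=\log u$, which solves $-\min\{|\nabla v|-\lambda,\ \Delta_\infty v+|\nabla v|^4\}=0$, and then applies a comparison principle in $\Omega$ (or in a tubular neighbourhood) between $v$ and distance-based barriers $-(\lambda\pm\varepsilon)d(x,\partial\Omega)$, perturbed by $\gamma d^2$ to make the subsolution strict. Comparison produces a global inequality $u(x)\ge u(x_0)e^{g_{\varepsilon,\gamma}(x)}$ (respectively $\le$) at some boundary point $x_0$ where it is sharp; this is exactly the missing lower (respectively upper) barrier, now touching from the correct side with $\partial_\nu\Phi(x_0)>0$. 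Plugging it into the boundary viscosity inequality then forces $\lambda\ge\beta$ (respectively $\lambda\le\beta$). In short, the step you describe as ``the hard part'' is not a local boundary computation but a global comparison argument, and the logarithmic change of variable is what makes that comparison tractable.
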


\section{Notations and Preliminaries}
\label{section_notion}
Throughout this article, $|\cdot|$ will denote the Euclidean norm in $\mathbb{R}^n$, and $\mathcal{H}^k(\cdot)$, for $k\in [0,n)$, will denote the $k-$dimensional Hausdorff measure in $\mathbb{R}^n$.

\subsection{ The viscosity \texorpdfstring{$\infty$}{inf}-eigenvalue problem}

We start by giving the notion of viscosity solution to a PDE problem, see \cite{CIL} for more details.

\begin{definizione}
  Let us consider the following boundary value problem 
    \begin{equation}
    \label{def:visc}
        \begin{cases}
            F(x, u, \nabla u, D^2u) =0 &\text{ in }\Omega, \\
            G(x,u,\nabla u)=0 &\text{ on }\partial \Omega,
        \end{cases}
    \end{equation}
    where $F:\R^n\times\R\times\R^n\times\R^{n\times n}\to \R$ and $B:\R^n\times \R\times \R^n\to\R$ are two continuous functions.
    \begin{description}
        \item[Viscosity supersolution] A lower semi-continuous function $u$ is a viscosity supersolution to \eqref{def:visc} if, whenever we fix $x_0\in \overline{\Omega}$, for every $\phi \in C^2(\overline{\Omega})$ such that $u(x)-\phi(x)\ge u(x_0)-\phi(x_0)=0$, i.e. $x_0$ is a strict minimum in $\Omega$ for $u - \phi$, then
        \begin{itemize}
            \item if $x_0 \in \Omega$, the following holds
            $$
            F\left(x_0,\phi(x_0), \nabla\phi(x_0) ,D^2\phi(x_0)\right) \geq 0
            $$
            \item if $x_0 \in \partial \Omega$, the following holds
            $$
            \max\Set{F\left(x_0,\phi(x_0),\nabla\phi(x_0) ,D^2\phi(x_0)\right), G\left(x_0, \phi(x_0), \nabla\phi(x_0)\right)}\geq 0
            $$
        \end{itemize}
        \item[Viscosity subsolution] An upper semi-continuous function $u$ is a viscosity subsolution to \eqref{def:visc} if,  whenever we fix $x_0\in \overline{\Omega}$, for every $\phi \in C^2(\overline{\Omega})$ such that $u(x)-\phi(x)\le u(x_0)-\phi(x_0)=0$, i.e. $x_0$ is a strict maximum in $\Omega$ for $u - \phi$, then
        \begin{itemize}
            \item if $x_0 \in \Omega$, the following holds
            $$
            F\left(x_0,\phi(x_0), \nabla\phi(x_0) ,D^2\phi(x_0)\right) \leq 0
            $$
            \item if $x_0 \in \partial \Omega$, the following holds
            $$
            \min\Set{F\left(x_0,\phi(x_0), \nabla\phi(x_0) ,D^2\phi(x_0)\right), G\left(x_0, \phi(x_0), \nabla\phi(x_0)\right)}\leq 0
            $$
        \end{itemize}
         \item[Viscosity solution] A continuous function $u$ is a viscosity solution to \eqref{def:visc} if it is both a supersolution and subsolution.
    \end{description}
\end{definizione}

A comparison result for supersolution and subsolution to a PDE problem holds, as proved in \cite{CIL}. It will be crucial in proving our main Theorem \ref{teo1.3}.
\begin{teorema}
    \label{comparison_cil}
    Let $\Omega$ be a bounded, open subset in $\R^n$, $F \in C(\Omega \times \R \times \R^n \times \mathcal{S}(n))$ be proper,
    
    \begin{equation}
        \label{proper}
        F(x,r,p,X)\le F(x,s,p,Y), \quad \text{ whenever } r\le s \text{ and } Y\le X
    \end{equation}
    and satisfy 
    \begin{enumerate}
        \item $\exists \gamma >0$ such that 
        $$\gamma(r-s)\leq F(x,r,p, X)- F(x,s,p, X), \quad \textrm{for}\; r\geq s, \; (x,p,X) \in \overline{\Omega} \times \R^n \times \mathcal{S}(n);$$
        \item $\exists \omega: [0,+\infty]\rightarrow [0,+\infty]$, that satisfies $\omega(0)=0$ and $\alpha >0$, such that 
        $$F(y,r,\alpha(x-y), Y)-F(x,r,\alpha(x-y), X)\leq \omega(\alpha \abs{x-y}^2+\abs{x-y}),$$
        whenever $x, y\in\Omega$, $r\in \R$, $X,Y \in \mathcal{S}(n)$ and it holds,
$$-3\alpha
        \begin{pmatrix}
            I & 0\\
            0 & I
        \end{pmatrix}  
       \leq   \begin{pmatrix}
            X & 0\\
            0 & -Y
        \end{pmatrix} \leq 3 \alpha
         \begin{pmatrix}
            I & -I\\
            -I & I
        \end{pmatrix}.$$
    \end{enumerate}
    Let $u$ be an upper semicontinuous function in  $\overline{\Omega}$ and a supersolution to  $F=0$ in $\Omega$,  and let $v$ be a lower semicontinuous function and a subsolution to $F=0$ in $\Omega$. Then, $$\max _{\Omega}(v-u )=\max_{\partial \Omega}(v-u) .$$
   
\end{teorema}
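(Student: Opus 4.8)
The plan is to prove the statement as a maximum principle for $v-u$ via the doubling-of-variables technique of Crandall--Ishii--Lions, arguing by contradiction. Suppose the conclusion fails, so that
\[
M := \max_{\overline{\Omega}}(v-u) > \max_{\partial\Omega}(v-u),
\]
where the semicontinuity hypotheses on $u$ and $v$ guarantee that $v-u$ attains its maximum $M$ on the compact set $\overline{\Omega}$. The strict gap forces every maximum point of $v-u$ to lie in the interior $\Omega$. For $\alpha>0$ I would then introduce the penalized functional on $\overline{\Omega}\times\overline{\Omega}$,
\[
\Phi_\alpha(x,y) := v(x) - u(y) - \frac{\alpha}{2}\abs{x-y}^2,
\]
and select a maximizer $(x_\alpha,y_\alpha)$. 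The standard convergence lemma for such penalizations shows that, as $\alpha\to\infty$, the penalty $\tfrac{\alpha}{2}\abs{x_\alpha-y_\alpha}^2\to 0$, that $\max\Phi_\alpha\to M$, and that (along a subsequence) $x_\alpha,y_\alpha$ converge to a common maximum point $\hat{x}$ of $v-u$. Since $M>\max_{\partial\Omega}(v-u)$, the point $\hat{x}$ is interior, hence $x_\alpha,y_\alpha\in\Omega$ for $\alpha$ large; in particular $v(x_\alpha)-u(y_\alpha)\to M>0$, so $v(x_\alpha)\ge u(y_\alpha)$ eventually.

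The central step is to invoke the Theorem on Sums (Jensen--Ishii's lemma). Because $(x_\alpha,y_\alpha)$ is an interior maximum of $\Phi_\alpha$, there exist symmetric matrices $X_\alpha,Y_\alpha\in\mathcal{S}(n)$ such that, writing $p_\alpha := \alpha(x_\alpha-y_\alpha)$, one has $(p_\alpha,X_\alpha)\in\overline{J}^{2,+}v(x_\alpha)$ and $(p_\alpha,Y_\alpha)\in\overline{J}^{2,-}u(y_\alpha)$, together with precisely the matrix inequality required in hypothesis (2):
\[
-3\alpha\begin{pmatrix} I & 0\\ 0 & I\end{pmatrix}
\le
\begin{pmatrix} X_\alpha & 0\\ 0 & -Y_\alpha\end{pmatrix}
\le
3\alpha\begin{pmatrix} I & -I\\ -I & I\end{pmatrix}.
\]
Using that $v$ is a subsolution and $u$ a supersolution, and that both $x_\alpha$ and $y_\alpha$ are interior, the interior parts of the definitions yield
\[
F(x_\alpha, v(x_\alpha), p_\alpha, X_\alpha)\le 0 \le F(y_\alpha, u(y_\alpha), p_\alpha, Y_\alpha).
\]

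It then remains to combine the two structural hypotheses. Subtracting the inequalities and inserting the intermediate value $F(x_\alpha,u(y_\alpha),p_\alpha,X_\alpha)$,
\[
0 \le \bigl[F(y_\alpha,u(y_\alpha),p_\alpha,Y_\alpha) - F(x_\alpha,u(y_\alpha),p_\alpha,X_\alpha)\bigr] + \bigl[F(x_\alpha,u(y_\alpha),p_\alpha,X_\alpha) - F(x_\alpha,v(x_\alpha),p_\alpha,X_\alpha)\bigr].
\]
The first bracket is bounded above by $\omega\!\left(\alpha\abs{x_\alpha-y_\alpha}^2+\abs{x_\alpha-y_\alpha}\right)$ thanks to hypothesis (2) (applied with $r=u(y_\alpha)$ and the matrices just produced, since the matrix inequality holds), while the second is bounded above by $-\gamma\bigl(v(x_\alpha)-u(y_\alpha)\bigr)$ thanks to the strict monotonicity (1) applied with $r=v(x_\alpha)\ge s=u(y_\alpha)$. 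Letting $\alpha\to\infty$, the argument of $\omega$ tends to $0$ so the first contribution vanishes, whereas the second tends to $-\gamma M$; this gives $0\le -\gamma M<0$, a contradiction. Hence no interior point can strictly exceed the boundary maximum, which is exactly $\max_{\Omega}(v-u)=\max_{\partial\Omega}(v-u)$.

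\textbf{Main obstacle.} The genuinely nontrivial ingredient is the Theorem on Sums, which converts the second-order maximality of $\Phi_\alpha$ into the pair of limiting jets $(p_\alpha,X_\alpha)$, $(p_\alpha,Y_\alpha)$ satisfying exactly the matrix inequality that hypothesis (2) is built around; without it the dependence of $F$ on $D^2u$ cannot be controlled. The remaining points are comparatively mild and would only need careful bookkeeping: verifying that the maximizers stay interior for large $\alpha$ (which rests on the strict gap between interior and boundary maxima and on the convergence lemma), checking $v(x_\alpha)\ge u(y_\alpha)$ so that (1) applies, and using that $\omega$ is a genuine modulus with $\omega(0)=0$ so that its vanishing argument forces the penalty contribution to $0$.
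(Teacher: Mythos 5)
The paper does not actually prove this theorem: it is quoted from \cite{CIL} (it is Theorem 3.3 of the Crandall--Ishii--Lions User's Guide), so the only benchmark is the standard proof there, which is precisely the route you take. Your deployment of the machinery is correct as far as it goes: the doubling of variables, the penalization lemma giving $\alpha\abs{x_\alpha-y_\alpha}^2\to 0$ and convergence of maximizers to a maximum point of $v-u$, the Theorem on Sums producing $X_\alpha, Y_\alpha$ with exactly the matrix inequality of hypothesis (2), and the splitting of $F(y_\alpha,u(y_\alpha),p_\alpha,Y_\alpha)-F(x_\alpha,v(x_\alpha),p_\alpha,X_\alpha)$ into the two brackets controlled by (2) and (1). (One point you silently correct: for $\Phi_\alpha$ to attain a maximum and for the jets $\overline{J}^{2,+}v$, $\overline{J}^{2,-}u$ to make sense, the subsolution $v$ must be upper semicontinuous and the supersolution $u$ lower semicontinuous; the statement in the paper has these two attributions swapped.)

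The genuine gap is the assertion ``in particular $v(x_\alpha)-u(y_\alpha)\to M>0$''. Nothing in your contradiction hypothesis gives $M>0$: you only assumed $M>\max_{\partial\Omega}(v-u)$, and both quantities may be negative. Positivity of $M$ is used twice --- to apply hypothesis (1) with $r=v(x_\alpha)\ge s=u(y_\alpha)$, and to turn the final inequality $0\le-\gamma M$ into a contradiction --- so what your argument actually proves is $\max_{\overline{\Omega}}(v-u)\le\max\{0,\,\max_{\partial\Omega}(v-u)\}$, i.e.\ the genuine CIL comparison theorem ($v\le u$ on $\partial\Omega$ implies $v\le u$ in $\Omega$). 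This gap cannot be patched, because the statement as transcribed in the paper is false when the boundary maximum is negative. Counterexample: $n=1$, $\Omega=(-1,1)$, $F(x,r,p,X)=\gamma r-X$; this $F$ is proper, satisfies (1) with equality, and satisfies (2) because the right-hand matrix inequality tested on vectors $(\xi,\xi)$ forces $X\le Y$, hence $F(y,r,q,Y)-F(x,r,q,X)=X-Y\le 0\le\omega$. Take $u\equiv 0$ (a classical, hence viscosity, supersolution) and $v(x)=-\varepsilon\cosh(\sqrt{\gamma}\,x)$, a classical solution of $\gamma v-v''=0$ and hence a viscosity subsolution. Then $\max_{\Omega}(v-u)=-\varepsilon$, attained only at $x=0$, while $\max_{\partial\Omega}(v-u)=-\varepsilon\cosh\sqrt{\gamma}<-\varepsilon$, so the claimed equality of maxima fails. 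The honest repair is to add the hypothesis $\max_{\partial\Omega}(v-u)\ge 0$, or to weaken the conclusion to $\max_{\overline{\Omega}}(v-u)\le\max\{0,\,\max_{\partial\Omega}(v-u)\}$; with either modification your argument closes and is a correct rendition of the proof in \cite{CIL}.
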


The following proposition clarifies the link between weak solutions and viscosity solutions.

\begin{prop}
\label{teorema1.3}
A continuous weak solution $u$ to \eqref{autoval_problem} is a viscosity solution to \eqref{autoval_problem}.
\end{prop}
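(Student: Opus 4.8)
The plan is to check directly from the definition that $u$ is at the same time a viscosity supersolution and a viscosity subsolution of \eqref{autoval_problem}, the interior and boundary operators being
$$F(x,r,\xi,X)=-\abs{\xi}^{p-2}\operatorname{tr}(X)-(p-2)\abs{\xi}^{p-4}\langle X\xi,\xi\rangle-\lambda_{p,\beta}(\Omega)\abs{r}^{p-2}r,$$
$$G(x,r,\xi)=\abs{\xi}^{p-2}\langle\xi,\nu\rangle-\beta^p\abs{r}^{p-2}r.$$
Since we work with $p>2$, $F$ extends continuously to $\xi=0$ with $F(x,r,0,X)=-\lambda_{p,\beta}(\Omega)\abs{r}^{p-2}r$. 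I shall repeatedly use that $\lambda_{p,\beta}(\Omega)<0$ and that the eigenfunction is strictly positive, and I treat interior and boundary points separately.

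For \emph{interior} points I follow the classical equivalence of weak and viscosity solutions for the $p$-Laplacian, see \cite{JLM}. Suppose, by contradiction, that $u$ fails to be a supersolution at some $x_0\in\Omega$: there is $\phi\in C^2$ such that $u-\phi$ has a strict minimum $0$ at $x_0$ and $F(x_0,\phi(x_0),\nabla\phi(x_0),D^2\phi(x_0))<0$, that is $-\Delta_p\phi<\lambda_{p,\beta}(\Omega)\abs{\phi}^{p-2}\phi$ near $x_0$ by continuity, so $\phi$ is a strict classical subsolution on some ball $B_r(x_0)$. With $m=\min_{\partial B_r(x_0)}(u-\phi)>0$, for $r$ small the function $\phi+m$ is still a strict subsolution and satisfies $\phi+m\le u$ on $\partial B_r(x_0)$ while $(\phi+m)(x_0)>u(x_0)$. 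Testing the weak formulation of \eqref{autoval_problem} for $u$ and the integrated classical inequality for $\phi+m$ against $\psi=(\phi+m-u)_+\in W^{1,p}_0(B_r(x_0))$ and subtracting, the monotonicity inequality $(\abs{\xi}^{p-2}\xi-\abs{\eta}^{p-2}\eta)\cdot(\xi-\eta)\ge0$ and the sign $\lambda_{p,\beta}(\Omega)<0$ force a nonpositive quantity to be strictly positive, a contradiction. The subsolution inequality follows symmetrically by touching from above, and the points with $\nabla\phi(x_0)=0$ are handled as in \cite{JLM} (for the supersolution they are immediate, since there $F(x_0,\phi(x_0),0,\cdot)=\abs{\lambda_{p,\beta}(\Omega)}\,u(x_0)^{p-1}\ge0$).

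The \emph{boundary} points are the delicate part, and I expect this to be the main obstacle. Repeating the interior computation fails: when $\psi=(\phi+m-u)_+$ does not vanish on $\partial\Omega$, the subtraction leaves an extra term $\beta^p\int_{\partial\Omega}(\abs{\phi+m}^{p-2}(\phi+m)-\abs{u}^{p-2}u)\,\psi\,d\mathcal{H}^{n-1}$ whose sign, precisely because the parameter is \emph{negative}, is the wrong one, and no contradiction survives. I would therefore avoid the weak formulation on $\partial\Omega$ entirely and instead use the up-to-the-boundary regularity of $u$: by the $C^{1,\alpha}$ theory for quasilinear equations with conormal (Robin) data one has $u\in C^{1,\alpha}(\overline\Omega)$, so the relation $\abs{\nabla u}^{p-2}\tfrac{\partial u}{\partial\nu}=\beta^p u^{p-1}$ holds pointwise on $\partial\Omega$. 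Let then $x_0\in\partial\Omega$ and let $\phi$ touch $u$ from below there. Restricting $u-\phi$ to $\partial\Omega$ gives $\nabla_T\phi(x_0)=\nabla_T u(x_0)$, while minimality along the inner normal gives $\tfrac{\partial\phi}{\partial\nu}(x_0)\ge\tfrac{\partial u}{\partial\nu}(x_0)$, the latter being $>0$ by the Robin relation and $u(x_0)=\phi(x_0)>0$. As $s\mapsto(\abs{\nabla_T\phi(x_0)}^2+s^2)^{(p-2)/2}s$ is strictly increasing and the tangential parts coincide, this yields $\abs{\nabla\phi(x_0)}^{p-2}\tfrac{\partial\phi}{\partial\nu}(x_0)\ge\abs{\nabla u(x_0)}^{p-2}\tfrac{\partial u}{\partial\nu}(x_0)=\beta^p\phi(x_0)^{p-1}$, i.e. $G\ge0$, hence $\max\{F,G\}\ge0$.

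For the subsolution property one touches from above, which reverses the normal inequality to $\tfrac{\partial\phi}{\partial\nu}(x_0)\le\tfrac{\partial u}{\partial\nu}(x_0)$; the same monotonicity (together with the trivial sub-case $\tfrac{\partial\phi}{\partial\nu}(x_0)\le0$, where $G\le0$ outright) gives $G\le0$ and thus $\min\{F,G\}\le0$. In short, the interior inequalities are the routine weak-to-viscosity computation, while the real difficulty is at the boundary, where the negative parameter destroys the sign that makes a comparison/test-function argument close; the remedy is the $C^{1,\alpha}(\overline\Omega)$ regularity, which turns the Robin condition into a pointwise identity and lets the monotonicity of the one-dimensional map in the normal derivative settle both boundary inequalities. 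The points to be handled with care are the degenerate set $\nabla\phi(x_0)=0$ and the exact smoothness of $\partial\Omega$ required for the boundary estimate.
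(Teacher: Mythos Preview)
Your interior argument is exactly the paper's, so there is nothing to add there.

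At the boundary, however, your diagnosis is mistaken. The test-function/comparison approach does \emph{not} fail, and the paper carries it out directly. The point you overlooked is that adding the constant $m/2$ to $\phi$ leaves the gradient unchanged, so after the shift the assumed boundary inequality reads
\[
\abs{\nabla\psi(x)}^{p-2}\frac{\partial\psi}{\partial\nu}(x)\;<\;\beta^p\abs{\phi(x)}^{p-2}\phi(x)
\qquad\text{with }\phi\text{ (not }\psi\text{) on the right}.
\]
When one integrates by parts and subtracts the weak formulation for $u$, the boundary remainder is therefore
\[
\beta^p\int_{\partial\Omega\cap\{\psi>u\}}\bigl(\abs{\phi}^{p-2}\phi-\abs{u}^{p-2}u\bigr)(\psi-u)\,d\mathcal{H}^{n-1},
\]
and this is $\le 0$ because the touching-from-below hypothesis gives $\phi\le u$ everywhere near $x_0$. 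Combined with $\lambda_{p,\beta}(\Omega)<0$ and the monotonicity of $t\mapsto\abs{t}^{p-2}t$, the contradiction closes exactly as in the interior. Your version, in which you also shifted the right-hand side to $\abs{\phi+m}^{p-2}(\phi+m)$, throws this sign away unnecessarily.

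Your alternative route via $C^{1,\alpha}(\overline\Omega)$ regularity and the monotonicity of $s\mapsto(\abs{\nabla_Tu}^2+s^2)^{(p-2)/2}s$ is a legitimate argument when the boundary is smooth enough to guarantee that regularity, and it gives the boundary inequality $G\ge 0$ (resp.\ $G\le 0$) directly, without ever testing the weak formulation. But it is strictly more demanding on $\partial\Omega$: the paper works with a Lipschitz boundary, where up-to-the-boundary $C^{1,\alpha}$ estimates are not available in general, so your detour would narrow the result. In short, the paper's proof is both simpler and applies under weaker hypotheses; the ``obstacle'' you anticipated is removed by keeping $\phi$, rather than $\psi$, in the Robin inequality.
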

\begin{proof}
Let $u$  be a continuous weak solution to \eqref{autoval_problem}, and let us prove that $u$ is a viscosity supersolution to \eqref{autoval_problem}.

Let $x_0 \in \Omega$, and let us consider a function $\phi $ such that $\phi(x_0)=u(x_0)$ and such that $u-\phi$ has a strict minimum at $x_0$, we claim that
\begin{equation}\label{caso1}
  -\Delta_p \phi(x_0)- \lambda_{p, \beta}(\Omega) \abs{\phi(x_0)}^{p-2}\phi(x_0)\ge 0.
\end{equation}

By contradiction, let us assume that \eqref{caso1} is false, then there exists $B_r(x_0)\subseteq\Omega$ such that

\begin{equation}
    -\Delta_p \phi(x)- \lambda_{p, \beta}(\Omega) \abs{\phi(x)}^{p-2}\phi(x)<0, \quad \forall x \in B_r(x_0),
\end{equation}
so we can define

$$m=\inf_{\abs{x-x_0}=r}u-\phi, \quad \psi=\phi+\frac{m}{2}.$$
The function $\psi$ is such that
$$\psi(x_0)>u(x_0), \quad \psi < u \text{ on } \partial B_r(x_0),$$
and if we choose $r$ small enough, we have
\begin{equation}\label{9}
    -\Delta_p \psi(x)- \lambda_{p, \beta}(\Omega) \abs{\psi(x)}^{p-2}\psi(x)<0 \quad \forall x \in B_r(x_0),
\end{equation}
so if we use $(\psi-u)_+$ in the weak formulation of \eqref{autoval_problem}, we get

\begin{equation*}
           \int_{\psi > u}  \abs{\nabla u}^{p-2} \nabla u \nabla (\psi - u)_+\, dx =\lambda_{p, \beta}(\Omega) \int_{\psi > u } \abs{u}^{p-2} u (\psi - u)_+ \, dx ,
\end{equation*}
while, if we multiply \eqref{9} by $(\psi-u)_+$ and we integrate over $\Omega$, we get
\begin{equation*} 
           \int_{\psi > u}  \abs{\nabla \psi}^{p-2} \nabla \psi \nabla (\psi - u)_+\, dx <\lambda_{p, \beta}(\Omega) \int_{\psi > u } \abs{\psi}^{p-2} \psi (\psi - u)_+ \, dx .
\end{equation*}

This implies

\begin{equation*}
    \begin{aligned}
  C(n,p) \int_{\psi > u} \abs{\nabla \psi - \nabla u}^p \, dx &\leq \int_{\psi > \psi} \left\langle \abs{\nabla \psi}^{p-2} \nabla \psi - \abs{\nabla u}^{p-2} \nabla u, \nabla(\psi - u)\right\rangle \, dx\\  
  &< \lambda_{p, \beta}(\Omega) \int_{\psi > u} \left(\abs{\psi}^{p-2} \psi-\abs{u}^{p-2} u\right) (\psi - u) \, dx <0,
\end{aligned}
\end{equation*}
and this is a contradiction.

Now, let us consider $x_0\in \partial \Omega$, and let us consider a function $\phi $ such that $\phi(x_0)=u(x_0)$ and such that $u-\phi$ has a strict minimum at $x_0$, we claim that

$$\max\left\{-\Delta_p \phi(x_0)- \lambda_{p, \beta}(\Omega) \abs{\phi(x_0)}^{p-2}\phi(x_0), \abs{\nabla\phi(x_0)}^{p-2}\frac{\partial\phi(x_0)}{\partial \nu}-\beta^p \abs{\phi(x_0)}^{p-2}\phi(x_0)\right\}\ge 0.$$

As before, let us assume by contradiction that there exists $B_r(x_0)$ such that both terms are negative. If we choose $r$ sufficiently small, in $\overline{\Omega}\cap B_r(x_0 )$, we have
\begin{equation}\label{caso2no}
     -\Delta_p\psi(x)- \lambda_{p, \beta}(\Omega) \abs{\psi(x)}^{p-2}\psi(x) < 0
\end{equation}
      
and, in $\partial\Omega \cap B_r(x_0 )$,
$$
\abs{\nabla \psi(x)}^{p-2} \frac{\partial \psi(x)}{ \partial \nu}-\beta^p \abs{\phi(x)}^{p-2} \phi(x) <0, 
$$
where, as in the previous case,  $$\psi = \phi + \frac{m}{2} \text{ and }  \displaystyle m=\inf_{\abs{x-x_0}=r}u-\phi.$$

If we choose $(\psi-u)_+$ in the weak formulation of \eqref{autoval_problem}, we get

\begin{equation*}
    \begin{multlined}
           \int_{\psi > u } \abs{\nabla u}^{p-2} \nabla u \nabla (\psi - u)_+\, dx =\lambda_{p, \beta}(\Omega) \int_{\psi > u} \abs{u}^{p-2} u (\psi - u) \, dx + \beta^p \int_{\partial \Omega \cap  \Set{\psi >u}} \abs{u}^{p-2} u (\psi - u) \, d\mathcal{H}^{n-1},
    \end{multlined}
\end{equation*}
while, if we multiply \eqref{caso2no} by $(\psi-u)_+$ and we integrate over $\Omega$, we get

\begin{equation*}
           \int_{\psi > u } \abs{\nabla\psi}^{p-2} \nabla \psi \nabla (\psi - u)_+\, dx<\lambda_{p, \beta}(\Omega) \int_{\psi > u} \abs{\psi}^{p-2} \psi (\psi - u) \, dx + \beta^p \int_{\partial \Omega \cap  \Set{\psi >u} } \abs{\phi}^{p-2} \phi (\psi - u) \, d\mathcal{H}^{n-1}.
\end{equation*}

This implies

\begin{equation*}
    \begin{aligned}
  C(n,p) \int_{\psi > u} \abs{\nabla \psi - \nabla u}^p \, dx &\leq \int_{\psi > u} \left\langle  \abs{\nabla \psi}^{p-2} \nabla \psi-\abs{\nabla u}^{p-2} \nabla u , \nabla(\psi - u)\right\rangle \, dx\\  
  &< \lambda_{p, \beta}(\Omega) \int_{\psi > u} \left(\abs{\psi}^{p-2} \psi-\abs{u}^{p-2} u\right) (\psi - u) \, dx + \\
  &+\beta^p\int_{\partial \Omega \cap  \Set{\psi >u} } \left(\abs{\phi}^{p-2} \phi-\abs{u}^{p-2}u \right)(\psi - u) \, d\mathcal{H}^{n-1}<0,
\end{aligned}
\end{equation*}
which is absurd.\\

To prove that $u$ is a viscosity subsolution, one can proceed in an analogous way.
\end{proof}

\section{Proof of main results}

We are now in position to prove Theorem \ref{existence_limit}.
\begin{proof}[Proof of Theorem \ref{existence_limit}]
 Let us consider $w=1$ as a test function in the definition \eqref{rel}, then we obtain
 \begin{equation*}
     -\lambda_{p,\beta}(\Omega)\geq \beta^p\dfrac{P(\Omega)}{\abs{\Omega}},
 \end{equation*}
 passing to the limit for $p$ that goes to $+\infty$, we obtain
 \begin{equation*}
     \liminf_{p\rightarrow \infty}\left(-\lambda_{p,\beta}(\Omega)\right)^{\frac{1}{p}}\geq \beta.
 \end{equation*}
 Let $u_p$ be the positive eigenfunction associated to $\lambda_{p,\beta}(\Omega)$ normalized such that $\norma{u_p}_{L^p(\partial\Omega)}=1$, we can write

 \begin{equation}
 \label{beta_allap}
 \left(- \lambda_{p,\beta}(\Omega) \right)\int_{\Omega}    \abs{u_p}^p\, dx + \int_\Omega \abs{\nabla u_p}^p= \beta^p
    \end{equation}
    hence, both quantities in \eqref{beta_allap} are smaller then $\beta^p$, and in particular,

    $$\beta \left(\int_{\Omega}    \abs{u_p}^p\, dx\right)^\frac{1}{p}\le\left(- \lambda_{p,\beta}(\Omega) \right)^\frac{1}{p}\left(\int_{\Omega}    \abs{u_p}^p\, dx\right)^\frac{1}{p} \leq \beta. $$
 This implies that for every $m<p$
 \begin{align*}
     &\norma{\nabla u_p}_{L^m(\Omega)}\leq \norma{\nabla u_p}_{L^p(\Omega)}\abs{\Omega}^{\frac{1}{m}-\frac{1}{p}}\leq \beta \abs{\Omega}^{\frac{1}{m}-\frac{1}{p}};\\
     & \norma{ u_p}_{L^m(\Omega)}\leq \norma{u_p}_{L^p(\Omega)}\abs{\Omega}^{\frac{1}{m}-\frac{1}{p}}\leq \abs{\Omega}^{\frac{1}{m}-\frac{1}{p}}.
 \end{align*}

By a classical argument of diagonalization, see for instance \cite{BDM}, we can extract a subsequence $u_{p_j}$ such that
\begin{gather*}
     u_{p_j} \to u_\infty \,  \text{ uniformly}  \implies \Vert u_{p_j}\Vert_{L^{p_j}(\Omega)} \to \norma{u_{\infty}}_{L^{\infty}(\Omega)}, \\
    \nabla u_{p_j} \to \nabla u_\infty \, \text{ weakly in } \, L^m(\Omega), \, \forall m >1.
\end{gather*} 
On the other hand, if we consider that
 \begin{equation*}
      -\lambda_{p,\beta}(\Omega)=\dfrac{\beta^p-\int_{\Omega}\abs{\nabla u_p}^p\;dx}{\int_{\Omega}\abs{u_p}^p\;dx}\leq\dfrac{\beta^p}{\int_{\Omega}\abs{u_p}^p\;dx}
 \end{equation*}
 and
 $$\lim_{p\rightarrow \infty}\norma{u_p}_{L^p(\Omega)}=\norma{u_{\infty}}_{L^{\infty}(\Omega)}\geq \norma{u_{\infty}}_{L^{\infty}(\partial \Omega)}=\lim_{p\rightarrow\infty}\norma{u_p}_{L^p(\partial\Omega)}=1$$
 then, passing to the limit, we have
 \begin{equation*}
     \limsup_{p\rightarrow \infty}\left(-\lambda_{p,\beta}(\Omega)\right)^{\frac{1}{p}}\leq \beta.
 \end{equation*}
 
\end{proof}

Now we can prove Theorem \ref{teo1.2}.

\begin{proof}[Proof of Theorem \ref{teo1.2}]
We divide the proof in two steps.

\textbf{Step 1} $u_\infty$ is a viscosity supersolution.\\ Let $x_0\in \Omega$  and let $\phi\in C^2(\Omega)$ be such that $u_\infty-\phi> u_\infty(x_0)-\phi(x_0)=0$ has a strict minimum in $x_0$. We want to show
$$
-\min\Set{\abs{\nabla \phi(x_0)} - \beta \phi(x_0), \Delta_\infty \phi(x_0)}\geq 0 
$$ 
Let us notice that, by the uniform convergence of $u_p$ to $u_\infty$,  $u_p- \phi$ has a minimum in $x_p$ and $x_p \to x_0$ as $p$ goes to $\infty$. If we set $\phi_p(x) =\phi(x) + c_p$ with $c_p= u_p(x_p) - \phi(x_p)$, $\phi_p$ is admissible in the definition of viscosity supersolution to problem \eqref{autoval_problem}  and Proposition \ref{teorema1.3} implies
\begin{equation}
    \label{7}
    -\abs{\nabla \phi_p(x_p)}^{p-2} \Delta \phi_p(x_p)- (p-2) \abs{\nabla \phi_p(x_p)}^{p-4}\Delta_\infty \phi (x_p)- \lambda_{p,\beta}(\Omega) \abs{\phi_p(x_p)}^{p-2}\phi_p(x_p) \geq0.
\end{equation}
Now, let us distinguish two cases.

\textbf{Case 1}: $\abs{\nabla \phi_p(x_p)}=0$, in this case, is trivial to prove
$$-\min\Set{\abs{\nabla \phi(x_0)} - \beta \phi(x_0), \Delta_\infty \phi(x_0)}\geq 0. $$

\textbf{Case 2}: $\abs{\nabla \phi_p(x_p)}\neq0$, so we can divide by $(p-2) \abs{\nabla \phi_p(x_p)}^{p-4}$, and  we obtain
\begin{equation}
\label{f1}
   \Delta_\infty \phi_p(x_p) + \frac{\abs{\nabla \phi_p(x_p)}^{2 } \Delta \phi_p(x_p)}{ p-2} \leq \frac{ \abs{\nabla \phi_p(x_p)}^4}{(p-2) \phi_p(x_p)}	\left(\frac{(-\lambda_{p,\beta}(\Omega))^{1/p} \phi_p(x_p)}{ \abs{\nabla \phi_p(x_p)}}\right)^p.
\end{equation}

If $\Delta_\infty \phi(x_0) \le 0$, the claim follows, otherwise we can raise \eqref{f1} to the power $1/p$ and we obtain

$
\abs{\nabla \phi(x_0)}-\beta\phi(x_0)\leq 0 $.

  Then, $-\min \Set{\abs{\nabla \phi(x_0)}-\beta \phi(x_0),  \Delta_\infty \phi(x_0)}\ge 0$ and $u_\infty$ is a viscosity supersolution.
  \vspace{1.5mm}

 Let us check the boundary conditions, and let us fix  $x_0 \in \partial \Omega$, $\phi\in C^2(\overline{\Omega})$ such that  $u-\phi> u(x_0)-\phi(x_0)=0$ has a strict minimum in $x_0$, our aim is to prove that 
$$
\max\Set{-\min\Set{\abs{\nabla \phi(x_0)} -\beta \phi(x_0),  \Delta_\infty \phi(x_0)}, \, \min\Set{\abs{\nabla \phi(x_0)}- \beta \phi(x_0),  \frac{\partial \phi}{\partial \nu} (x_0)}} \geq 0.
$$
We define the function $\phi_p$ as before and we observe that if, for infinitely many $p$, $x_p\in \Omega$ and inequality \eqref{7} holds true, then we get
$$
-\min\Set{\abs{\nabla \phi(x_0)} - \beta \phi(x_0),  \Delta_\infty \phi(x_0)} \geq 0.
$$
If for infinitely many $p$, $x_p \in \partial \Omega$ the following holds true 

\begin{equation}
\label{nabla}
\abs{\nabla \phi_p(x_p)}^{p-2}\left(\frac{\partial \phi_p(x_p)}{ \partial \nu}\right)\geq \beta^p \abs{\phi_p(x_p)}^{p-2} \phi_p(x_p).
\end{equation}

Let us observe that

$$\displaystyle{ \frac{\partial \phi}{\partial \nu}(x_p) \ge 0},$$
then letting $p$ to infinity
$$\displaystyle{ \frac{\partial \phi}{\partial \nu}(x_0) \ge 0}.$$
Moreover, if we divide \eqref{nabla} by $\abs{\nabla \phi_p(x_p)}^{p-2}$, and we raise to the power $1/p$
   
$$
\left(\frac{\partial \phi_p(x_p)}{ \partial \nu} \right)^{1/p}\geq \left(\frac{\beta^p \abs{\phi_p(x_p)}^{p-2} \phi_p(x_p)}{\abs{\nabla \phi_p(x_p)}^{p-2}}\right)^{1/p}, 
$$

we get $\displaystyle{\abs{\nabla \phi(x_0)} \geq \beta \phi (x_0)}$ by sending $p\to +\infty$.
That is
$$
\min\Set{\abs{\nabla \phi(x_0)}- \beta \phi(x_0), \frac{\partial \phi}{\partial \nu} (x_0)} \geq 0.
$$

\textbf{Step 2}  $u_\infty$ is a viscosity subsolution.

Let us fix $x_0\in\Omega$, $\phi\in C^2(\Omega)$ such that $u_\infty-\phi\le u(x_0)-\phi(x_0)=0 $ has a strict maximum in $x_0$. We want to prove that 
$$-\min\left\lbrace\abs{\nabla \phi(x_0)}-\beta\phi(x_0), \Delta_\infty \phi(x_0)\right\rbrace\le 0.$$ 

As before, let us consider the test function $\phi_p$ that is admissible in the definition of viscosity solution to \eqref{autoval_problem}, we have

\begin{equation*}
-\lambda_{p,\beta}(\Omega) \phi_p^{p-1}(x_p) \le (p-2) \abs{\nabla \phi_p(x_p)}^{p-4}
\left[\frac{\abs{\nabla \phi_p(x_p)}^2\Delta\phi_p(x_p)}{p-2}
+ \Delta_\infty \phi_p(x_p)\right].
\end{equation*}
Let us observe that $\abs{\nabla \phi_p(x_p)}\neq0$, so we can divide by $(p-2) \abs{\nabla \phi_p(x_p)}^{p-4}$, raise both side to the power $1/p$ and  we obtain 
$$\beta \phi(x_0)\le \abs{\nabla \phi(x_0)},$$
moreover $\Delta_\infty \phi(x_0)\ge 0$ by taking the limit,
which shows that $u_\infty$ is a viscosity subsolution to \eqref{prob:lim}.

Similar arguments to Step 1 give us the boundary conditions for viscosity subsolution. Indeed, let us fix  $x_0 \in \partial \Omega$, $\phi\in C^2(\overline{\Omega})$ such that  $u-\phi< u(x_0)-\phi(x_0)=0$ has a strict maximum in $x_0$, our aim is to prove that 
\begin{equation}
    \label{subsol}
    \min\Set{-\min\Set{\abs{\nabla \phi(x_0)} -\beta \phi(x_0),  \Delta_\infty \phi(x_0)}, \min\Set{\abs{\nabla \phi(x_0)}- \beta \phi(x_0),  \frac{\partial \phi}{\partial \nu} (x_0)}} \leq 0.
\end{equation}

We define the function $\phi_p$ as before and we observe that if, for infinitely many $p$, $x_p\in \Omega$ and inequality \eqref{subsol} holds true, then we get
$$
-\min\Set{\abs{\nabla \phi(x_0)} - \beta \phi(x_0),  \Delta_\infty \phi(x_0)} \leq 0.
$$
If for infinitely many $p$, $x_p \in \partial \Omega$, the following holds true 
$$
\abs{\nabla \phi_p(x_p)}^{p-2} \frac{\partial \phi_p(x_p)}{ \partial \nu}- \beta^p \abs{\phi_p(x_p)}^{p-2} \phi_p(x_p)   \leq 0,
$$
then only two cases can occur:

\begin{itemize}
    \item $\displaystyle{\frac{\partial\phi}{\partial\nu}}\le 0$;
    \item $\displaystyle{\frac{\partial\phi}{\partial\nu}}> 0$, then, as before
\end{itemize}

$$
\left(\abs{\nabla \phi_p(x_p)}^{p-2}\left(  \frac{\partial \phi_p(x_p)}{ \partial \nu}\right) \right)^{1/p}\leq \left(\beta^p \abs{\phi_p(x_p)}^{p-2} \phi_p(x_p)\right)^{1/p} 
$$

and if we let $p$ to $+\infty$, we get $\displaystyle{\abs{\nabla \phi(x_0)} \leq \beta \phi (x_0)}$.
That is
$$
\min\Set{\abs{\nabla \phi(x_0)}- \beta \phi(x_0), \frac{\partial \phi}{\partial \nu} (x_0)} \leq 0.
$$
\end{proof}

\begin{oss}
    Let us observe that Theorem \ref{teo1.2} implies that $u_{\infty}$ is $\infty$-subharmonic since $$-\Delta_{\infty}u_{\infty}\leq 0$$ in the viscosity sense. By the comparison Theorem \ref{comparison_cil}, this implies that $u_{\infty}$ achieves its maximum on $\partial \Omega$. Hence, $u_{\infty}$ inherits the same property of function $u_p$, which follows from the fact that they are $p$-subharmonic (see \cite{V}).
\end{oss}

\paragraph{The radial case}
Let us consider $\Omega=B_1(0)$, the ball centered at the origin with radius one. It is well known that the solution to \eqref{autoval_problem} is a radial function $u_p(x)=u_p(\abs{x})$. In this case, equation \eqref{autoval_problem} can be rewritten as follows

$$
s^{n-1} \Delta_p u_p(s)= \frac{d}{ds}\left( s^{n-1} \abs{u_p'(s)}^{p-2} u_p'(s)\right)= (-\lambda_{p,\beta}(\Omega)) \abs{u_p(s)}^{p-2} u_p(s) s^{n-1}.
$$

If we choose $u_p$ to be a positive eigenfunction and we integrate between $0$ and $t$ we have

\begin{equation*}
    t^{n-1} (u_p'(t))^{p-1} =\int_0^t (-\lambda_{p,\beta}(\Omega)) (u_p(s))^{p-1} s^{n-1} \, ds
\end{equation*}
being $u_p'(0)=0$, hence
\begin{equation*}
    u_p(r)-u_p(0) =\int_0^r\left(\frac{1}{t^{n-1}}\int_0^t (-\lambda_{p,\beta}(\Omega)) (u_p(s))^{p-1} s^{n-1}\, ds\right)^\frac{1}{p-1}\, dt
\end{equation*}
and if we let $p\to +\infty$

\begin{equation*}
    u_\infty(r)-u_\infty(0)= \beta \int_0^r \norma{u_\infty}_{L^\infty(B_t(0))}\, dt,
\end{equation*}
where we have used \eqref{limit_autoval}. Now, observing that the function $u_\infty$ is increasing along the radii, we have

\begin{equation*}
    u_\infty(r)-u_\infty(0)= \beta \int_0^r u_\infty(t)\, dt,
\end{equation*}

and $u_\infty$ must have the form

$$u_\infty(t)=Ce^{\beta t}.$$
 \paragraph{One dimensional case} This case is analogous to the radial case, so let us consider $\Omega=[-1,1]$. It is well known that the solution to \eqref{autoval_problem} is an even function $u_p(x)=u_p(\abs{x})$. In this case, equation \eqref{autoval_problem} can be rewritten as follows.

$$
\Delta_p u_p(s)= \frac{d}{ds}\left( \abs{u_p'(s)}^{p-2} u_p'(s)\right)= (-\lambda_{p,\beta}(\Omega)) \abs{u_p(s)}^{p-2} u_p(s).
$$
If we choose $u_p$ to be a positive eigenfunction and we integrate between $0$ and $t$ we have

\begin{equation*}
   (u_p'(t))^{p-1} =\int_0^t (-\lambda_{p,\beta}(\Omega)) (u_p(s))^{p-1}  \, ds
\end{equation*}

hence,
\begin{equation*}
    u_p(r)-u_p(0) =\int_0^r\left(\int_0^t (-\lambda_{p,\beta}(\Omega)) (u_p(s))^{p-1} \, ds\right)^\frac{1}{p-1}\, dt
\end{equation*}
and if we let $p\to +\infty$

\begin{equation*}
    u_\infty(r)-u_\infty(0)= \beta \int_0^r \norma{u_\infty}_{L^\infty([0,1])}\, dt,
\end{equation*}
where we have used \eqref{limit_autoval}. Now, observing that the function $u_\infty$ is increasing along the radii, we have

\begin{equation*}
    u_\infty(r)-u_\infty(0)= \beta \int_0^r u_\infty(t)\, dt,
\end{equation*}
hence $u_\infty$ must have the form

$$u_\infty(t)=Ce^{\beta t}.$$

The analysis of the radial case suggests us to consider an auxiliary problem to prove our Theorem \ref{teo1.3}. In particular, we want to apply the comparison result Theorem \ref{comparison_cil} to the new PDE, so let us start by showing that $v=\log u$ is a viscosity solution to a PDE problem.

\begin{lemma}\label{logaritmo}
Let $\Omega$ be an open bounded set. 
    If $u$ is a positive viscosity solution to the problem 
    $$-\min\left\{\abs{\nabla \phi}-\lambda \phi, \Delta_{\infty}\phi\right\}=0\qquad \textrm{in}\; \Omega,$$
     then $v=\log{u}$ is a viscosity solution to 
    \begin{equation}
    \label{log_eq}
    -\min\left\{\abs{\nabla v}-\lambda, \Delta_{\infty}v+\abs{\nabla v}^4\right\}=0 \qquad \textrm{in }\Omega.
    \end{equation}
     
\end{lemma}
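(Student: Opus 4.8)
The plan is to transfer the test-function inequalities for $u$ to those for $v=\log u$ through the smooth, strictly increasing change of variables $s\mapsto e^{s}$. Concretely, given a test function $\phi\in C^{2}$ that touches $v$ from below (resp.\ above) at an interior point $x_{0}$ with $v(x_{0})=\phi(x_{0})$, I set $\psi:=e^{\phi}$. Since $e^{s}$ is strictly increasing and $v\ge\phi$ (resp.\ $v\le\phi$) near $x_{0}$ with equality at $x_{0}$, the function $u-\psi=e^{v}-e^{\phi}$ has a strict minimum (resp.\ maximum) at $x_{0}$; hence $\psi$ is admissible in the definition of viscosity super/subsolution for the equation satisfied by $u$. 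Thus the whole proof reduces to rewriting the min-inequality for $u$ at $x_{0}$, evaluated on $\psi$, as the corresponding min-inequality for $v$ evaluated on $\phi$.

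The heart of the argument is the pointwise computation of how the two quantities inside the minimum transform. Writing $\psi=e^{\phi}$ one has $\nabla\psi=e^{\phi}\nabla\phi$ and $D^{2}\psi=e^{\phi}\left(\nabla\phi\otimes\nabla\phi+D^{2}\phi\right)$, whence
\begin{align*}
\abs{\nabla\psi}&=e^{\phi}\abs{\nabla\phi},\\
\Delta_{\infty}\psi&=\left\langle D^{2}\psi\,\nabla\psi,\nabla\psi\right\rangle=e^{3\phi}\left(\abs{\nabla\phi}^{4}+\Delta_{\infty}\phi\right).
\end{align*}
Evaluating at $x_{0}$, where $\psi(x_{0})=e^{\phi(x_{0})}=u(x_{0})>0$, the two entries of the minimum for $u$ become
\begin{align*}
\abs{\nabla\psi(x_{0})}-\lambda\psi(x_{0})&=u(x_{0})\left(\abs{\nabla\phi(x_{0})}-\lambda\right),\\
\Delta_{\infty}\psi(x_{0})&=u(x_{0})^{3}\left(\abs{\nabla\phi(x_{0})}^{4}+\Delta_{\infty}\phi(x_{0})\right).
\end{align*}
No case distinction on whether $\nabla\phi(x_{0})$ vanishes is needed, since these identities hold verbatim for every $C^{2}$ function.

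The final ingredient is the elementary remark that multiplying each argument of a minimum by a strictly positive factor does not change its sign: for $a,b>0$ one has $\min\{ax,by\}\le 0$ if and only if $\min\{x,y\}\le 0$, and likewise with $\ge$. Applying this with $a=u(x_{0})$ and $b=u(x_{0})^{3}$ (both positive since $u>0$) converts the interior supersolution inequality $\min\Set{\abs{\nabla\psi(x_{0})}-\lambda\psi(x_{0}),\,\Delta_{\infty}\psi(x_{0})}\le 0$ for $u$ into $\min\Set{\abs{\nabla\phi(x_{0})}-\lambda,\,\abs{\nabla\phi(x_{0})}^{4}+\Delta_{\infty}\phi(x_{0})}\le 0$, which is precisely the supersolution inequality for $v$ in \eqref{log_eq}; the subsolution case is identical with $\le$ replaced by $\ge$. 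I expect the only delicate point to be the careful bookkeeping of the touching condition under the exponential map, together with the observation that it is exactly the positivity of $u$ that licenses cancelling the factors $u(x_{0})$ and $u(x_{0})^{3}$; the computation of the transformation rule for $\Delta_{\infty}$ is routine, and once it is in hand the conclusion is immediate.
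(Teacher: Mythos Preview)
Your argument is correct and follows the same route as the paper: pass from a test function $\phi$ for $v$ to $\psi=e^{\phi}$ for $u$, compute $\abs{\nabla\psi}=e^{\phi}\abs{\nabla\phi}$ and $\Delta_{\infty}\psi=e^{3\phi}\bigl(\Delta_{\infty}\phi+\abs{\nabla\phi}^{4}\bigr)$, and use positivity of $e^{\phi(x_{0})}$ to transfer the sign of the minimum. The paper's proof is the same computation, only stated more tersely; your explicit remark that multiplying the two arguments of the minimum by the distinct positive factors $u(x_{0})$ and $u(x_{0})^{3}$ preserves the sign is the one step the paper leaves to the reader.
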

\begin{proof}
    Let $\phi \in C^2(\Omega)$, such that $v-\phi\geq v(x_0)-\phi (x_0)=0$, then $\psi=e^{\phi}$ is a good test function in the definition of viscosity supersolution for $u$ at the point $x_0$. So, we have
    $$-\min\{\abs{\nabla \psi(x_0)}-\lambda \psi (x_0),\Delta_{\infty} \psi (x_0)\}\geq 0.$$
    If we rewrite this inequality in terms of $\phi$, we get
    $$-\min\left\{e^{\phi (x_0)}\left(\abs{\nabla \phi (x_0)}-\lambda\right),e^{3\phi(x_0)}\left(\Delta_{\infty}\phi(x_0)+\abs{\nabla \phi (x_0)}^4\right)\right\}\geq 0,$$
    and so $v$ is a viscosity supersolution to the equation \eqref{log_eq}.
    The proof of the viscosity subsolution is analogous.
    
\end{proof}

We can now proceed with the proof of Theorem \ref{teo1.3}.

\begin{proof}[Proof of Theorem \ref{teo1.3}]
    Let $u$ be a positive nontrivial  eigenfunction associated to $\lambda$, then by Lemma \ref{logaritmo} $v=\log(u)$ is a viscosity solution to 

    \begin{equation}
    \label{log_lambda}
        -\min\left\{\abs{\nabla v}-\lambda, \Delta_{\infty}v+ \abs{\nabla v}^4\right\}=0, \quad \text{ in } \Omega.
    \end{equation}

Let us start by proving that $\lambda\ge \beta$. 

In \cite{charro}, it was proved that $-(\lambda+\varepsilon)d(x,\partial\Omega)$ is a viscosity solution to

$$-\min\{\abs{\nabla \phi}-(\lambda +\varepsilon), \Delta_{\infty}\phi\},$$
    hence, for every $\varepsilon>0 $, and $\gamma <\frac{\varepsilon}{2R}$, the function 
    $$g_{\varepsilon,\gamma}(x)=-(\lambda+\varepsilon)d(x,\partial \Omega)+\gamma d^2(x,\partial\Omega),$$ 
    is a viscosity subsolution to \eqref{log_lambda}.\\
    By the comparison principle in \cite{CIL}, Theorem \ref{comparison_cil}, it holds
    $$\max_{\Omega}(g_{\varepsilon,\gamma}-v)=\max_{\partial \Omega}(g_{\varepsilon,\gamma}-v)=-\log(u(x_0)).$$
    Then, $$u(x)\geq u(x_0)e^{g_{\varepsilon,\gamma}(x)}.$$

     In a tubular neighbourhood $\Gamma$ of $\partial\Omega$, the boundary $\partial\Omega$ and the distance function $d(x,\partial\Omega)$ share the same regularity (see \cite{GT}):  so both $d(x,\partial\Omega)$ and $g_{\varepsilon,\gamma}$ are $C^2(\Gamma)$, and we can use
     $\Phi(x)=u(x_0)e^{g_{\varepsilon,\gamma}(x)}$ in the definition of viscosity supersolution to \eqref{prob:lim} for $u$.\\ By a direct computation, we have that 
     
     \begin{align*}
     &\abs{\nabla \Phi(x_0)}-\lambda \Phi(x_0)=\Phi(x_0)\left(\lambda+\varepsilon-\lambda\right)=\varepsilon\Phi(x_0)>0,
     \vspace{5mm}
     \\
     &\Delta_{\infty} \Phi=\Phi^3\left(\abs{\nabla g_{\varepsilon,\gamma}}^4+\Delta_{\infty}g_{\varepsilon,\gamma}\right)=\Phi^3\left(\abs{\nabla g_{\varepsilon,\gamma}}^4+2\gamma\right)>0\\
     &\dfrac{\partial \Phi}{\partial \nu}(x_0)=\Phi(x_0)\nabla g_{\varepsilon,\gamma}(x_0)\cdot (-\nabla d(x_0,\partial \Omega))=\Phi(x_0) \abs{\nabla g_{\varepsilon,\gamma}}>0.
     \end{align*}
     Hence, 
     $$0\leq\abs{\nabla \phi(x_0)} -\beta \Phi(x_0)=\Phi(x_0)(\lambda- \varepsilon-\beta),$$
     and passing to the limit for $\varepsilon$ that goes to $0$, we obtain $\lambda \geq\beta$.
     
   Let us now prove that $\lambda\le \beta$.  
    
    For every $\varepsilon>0 $, 
    $$h_{\varepsilon}(x)=-(\lambda-\varepsilon)d(x,\partial \Omega),$$ 
    is a viscosity supersolution to \eqref{log_lambda} in a tubular neighborhood $\Gamma$ of the boundary of $\Omega$.
This is true because in $\Gamma$ the function $h_{\varepsilon}$ is differentiable, so for every $\phi \in C^2(\Gamma)$ such that $h_{\varepsilon}(x)-\phi(x)\geq h_{\varepsilon}(x_0)-\phi(x_0)=0$, 
$$\abs{\nabla \phi (x_0)}=\abs{h_{\varepsilon}(x_0)}=\lambda-\varepsilon,$$
for every $x_0 \in \Gamma$.
 
By the comparison principle in \cite{CIL} (Theorem \ref{comparison_cil}), it holds
    $$\max_{\Gamma}(v-h_{\varepsilon})=\max_{\partial \Gamma}(v-h_{\varepsilon})=\log(u(x_0))-h_\varepsilon(x_0).$$
    Then, $$u(x)\leq u(x_0)e^{h_{\varepsilon}(x)-h_\varepsilon(x_0)}.$$

      and we can use
     $\Phi(x)=u(x_0)e^{h_{\varepsilon}(x)-h_\varepsilon(x_0)}$ in the definition of viscosity subsolution to \eqref{prob:lim} for $u$. By a direct computation, we have that 

     \begin{equation}
         \abs{\nabla \Phi(x_0)}-\lambda \Phi(x_0)=\Phi(x_0)\left(\lambda-\varepsilon-\lambda\right)=-\varepsilon\Phi(x_0)<0,
     \end{equation}
     hence
$$-\min\left\{\abs{\nabla \Phi(x_0)}-\lambda \Phi(x_0), \Delta_{\infty}\phi\right\}\leq 0,$$ which means that $x_0$ is necessarily on $\partial \Omega$. Moreover,
     
     \begin{equation*}
     \dfrac{\partial \Phi}{\partial \nu}(x_0)=\Phi(x_0)\nabla h_{\varepsilon}(x_0)\cdot (-\nabla d(x_0,\partial \Omega))=\Phi(x_0) \abs{\nabla h_{\varepsilon}}>0.
     \end{equation*}
     Hence, 
     $$0\geq\abs{\nabla \phi(x_0)} -\beta \Phi(x_0)=\Phi(x_0)(\lambda- \varepsilon-\beta),$$
     and passing to the limit for $\varepsilon$ that goes to $0$, we obtain $\lambda \le\beta$.
     
\end{proof}

\subsection*{Acknowledgements}
The authors were partially supported by Gruppo Nazionale per l’Analisi Matematica, la Probabilità e le loro Applicazioni
(GNAMPA) of Istituto Nazionale di Alta Matematica (INdAM).   \\
During the preparation of this work, Alba Lia Masiello was partially supported by  PRIN 2022, 20229M52AS: "Partial differential equations and related geometric-functional
inequalities,"\\
CUP:E53D23005540006.

\Addresses 

\addcontentsline{toc}{chapter}{Bibliografia}
\bibliographystyle{plain}
\bibliography{biblio}

\end{document}